\theoremstyle{plain}
\newtheorem{lem}{Lemma}[section]
\newtheorem{prop}[lem]{Proposition}
\newtheorem{thm}[lem]{Theorem}
\theoremstyle{definition}
\newtheorem{ex}[lem]{Example}
\newtheorem{rem}[lem]{Remark}
\newtheorem{dfn}[lem]{Definition}
\newcommand{\Z}{\mathbb{Z}}               
\newcommand{\LL}{\mathbb{L}}                
\newcommand{\DF}{\mathbb{D}_F}   
\newcommand{\DFd}{\DF^\star}          
\newcommand{\Ll}{\mathcal{L}}   
\DeclareMathOperator{\CH}{\mathrm{CH}}      
\DeclareMathOperator{\Hom}{\mathrm{Hom}}      
\DeclareMathOperator{\hh}{\mathtt{h}}             
\newcommand{\de}{\delta}                   
\newcommand{\al}{\alpha}
\newcommand{\be}{\beta}
\newcommand{\la}{\lambda}
\begin{document}

\title[Localized operations]{Localized operations on $T$-equivariant oriented cohomology of projective homogeneous varieties}

\author[K.~Zainoulline]{Kirill Zainoulline}
\address[Kirill Zainoulline]{Department of Mathematics and Statistics, University of Ottawa, 150 Louis-Pasteur, Ottawa, ON, K1N 6N5, Canada}
\email{kirill@uottawa.ca}
\urladdr{http://mysite.science.uottawa.ca/kzaynull/}

\subjclass[2010]{14C15, 14C40, 14F43, 14L30, 14M15, 55N20, 55N22, 55N91, 55S25}
\keywords{equivariant oriented cohomology, projective homogeneous variety, cohomological operation, Schubert calculus}

\begin{abstract} 
In the present paper we provide a general algorithm to compute multiplicative cohomological operations 
on algebraic oriented cohomology of projective homogeneous $G$-varieties, 
where $G$ is a split reductive algebraic group over a field $k$ of characteristic $0$.
More precisely, we extend such operations to the respective
$T$-equivariant ($T$ is a maximal split torus of $G$) oriented theories, and then compute them using equivariant Schubert calculus techniques.
This generalizes an approach suggested by Garibaldi-Petrov-Semenov for Steenrod operations.
We also show that operations on the theories of additive type commute with classical push-pull operators up to a twist.
\end{abstract}

\maketitle


\section{Introduction}

Cohomological operations can be viewed as morphisms between algebraic oriented cohomology theories (AOCTs). A classical example of such operation is the Chern character which gives a morphism between the $K$-theory and the Chow theory with rational coefficients. 
Another celebrated examples are the Steenrod (Landweber-Novikov) operations which can be viewed as endomorphisms of the Chow theory with finite coefficients (resp. of cobordism). Observe that following a general Riemann-Roch formalism to each cohomological operation one can assign the respective Riemann-Roch type formula which essentially describes its behaviour with respect to push-forwards (see e.g. Panin~\cite{Pa04}). In this way the Chern character leads to the classical Grothendieck-Riemann-Roch theorem \cite{SGA6}, the Steenrod operations for closed embeddings give the Wu formula and for projections -- various versions of the Rost degree formulas (see Levine~\cite{Le07} and Merkurjev~\cite{Me03}).

When dealing with operations one of the major problems is to effectively compute them for some given class of varieties. For example, the problem of computing the Steenrod operations on Chow groups of twisted flag varieties has been a subject of intensive investigations until recently. Motivated by the use of Steenrod operations in the proof of the Milnor conjecture by Voevodsky, it became an important tool (see Brosnan~\cite{Br03} and Merkurjev~\cite{Me03}) in the study of algebraic cycles and motives of quadrics and Severi-Brauer varieties. In the recent paper by Garibaldi-Petrov-Semenov~\cite{GPS} the Steenrod operations have been used to describe motivic decomposition types of twisted flag varieties. As for Landweber-Novikov operations, we refer to pioneer works by Vishik (see e.g. \cite{Vi17}) where these operations have been successfully applied to study algebraic cobordism.

In the present paper we provide a general algorithm to compute such operations on projective homogeneous $G$-varieties, where $G$ is a split reductive algebraic group over a field $k$ of characteristic $0$. Its main point is 
\begin{itemize}
\item[(i)] to lift a given operation to the `$T$-equivariant cohomology level', where $T$ is a maximal split torus of $G$, and then 
\item[(ii)] to apply equivariant Schubert calculus techniques to compute it.
\end{itemize}
Observe that in the case of Chow groups and Steenrod operations such algorithm has been already proposed in \cite{GPS} which can be viewed as a starting point of the present work.

Our key tools are the Riemann-Roch formalism for AOCTs by Panin~\cite{Pa04} and Levine-Morel~\cite{LM}, Vishik's results~\cite{Vi17} on the correspondence between (multiplicative) operations and morphisms of formal group laws, equivariant localization techniques of Kostant-Kumar~\cite{KK86,KK90} and their generalizations to arbitrary AOCTs~\cite{HMSZ,CZZ,CZZ1,CZZ2}. 

 As for (i) note that equivariant extensions of the Chern character have been studied before in the context of the Grothendieck-Riemann-Roch theorem by many authors (see e.g.~\cite{EG00} and recently~\cite{AGP}). Our main result (Theorem~\ref{thm:main}) states that any operation on AOCTs (e.g. the Chern character or the Steenrod operation) can be extended to the $T$-equivariant oriented cohomology in a way compatible with forgetful and characteristic maps. 
 
 As for (ii) we describe explicit formulas and steps needed to compute the operations on both the Schubert and the dual bases (Section~\ref{sec:compop}). We also show that multiplicative operations on theories of additive type commute with classical push-pull operators up to a twist (Proposition~\ref{thm:Riem}).
  
The paper is organized as follows. In section~\ref{sec:fgl} we recall basic properties of formal group laws, algebraic oriented cohomology theories and multiplicative operations. In the next section~\ref{sec:twth} we provide examples of multiplicative operations (e.g. Landweber-Novikov and Steenrod operations) using the techniques of twisted theories. In section~\ref{sec:main} we discuss $T$-equivariant oriented cohomology theories, forgetful and characteristic maps and we state and prove our main Theorem~\ref{thm:main}. In section~\ref{sec:compop} we describe the algorithms used to compute cohomological operations. In the last section, we study operations on theories of additive type and its behaviour with respect to push-pull operators.

\paragraph{\it Acknowledgements}  Partially supported by the NSERC Discovery grant RGPIN-2015-04469, Canada. 


\section{Formal group laws, oriented cohomology and operations}\label{sec:fgl}

\subsection{\it Formal group laws} We recall definition and basic properties of one-dimensional commutative formal group laws following \cite{La55}.

Let $R$ be a commutative unital ring. By a formal group law $F$ over $R$ (FGL $F/R$) we call a power series in two variables $F(x,y)\in R[[x,y]]$ also denoted $x+_F y$ which is commutative $x+_F y=y+_F x$, associative $(x+_F y)+_F z=x+_F(y+_F z)$ and has neutral element $x+_F 0=x$. By definition, we have
\[
x+_F y=x+y+\sum_{i,j>0} a_{ij}x^i y^j,\;\text{ for some coefficients }a_{ij}\in R.
\]
The ring $R$ is called the coefficient ring of $F$.

The quotient of the polynomial ring in $a_{ij}$s modulo relations imposed by the commutativity and associativity gives the Lazard ring $\LL$. The universal FGL over $\LL$ is denoted $F_U$. For any FGL $F$ over $R$ there is the map $\LL \to R$ given by evaluating the coefficients $a_{ij}$ of $F_U$.

By a morphism $F_1/R_1 \to F_2/R_2$ between FGLs we call a pair $(\phi,\gamma)$, where $\phi\colon R_1\to R_2$ is a ring homomorphism and $\gamma\in R_2[[x]]$ such that
\[
\phi(F_1)(\gamma(x),\gamma(y))=\gamma(F_2(x,y)).
\]

\begin{ex}
Basic examples of FGLs are 

(i) The additive FGL $x+_F y=x+y$ over $R=\Z$ with $F_U\to F$ given by $a_{ij}\mapsto 0$;

(ii) The multiplicative FGL $x+_F y=x+y-\be xy$ over $R=\Z[\be]$ with $F_U\to F$ given by $a_{11}\mapsto -\be$, $a_{ij}\mapsto 0$ for all $i+j>2$. If $\be$ is invertible and $R=\Z[\be^{\pm 1}]$, then the respective $F$ is called multiplicative periodic.
\end{ex}

\subsection{\it Algebraic oriented cohomology}
Let $\hh$ be a graded algebraic oriented cohomology theory over a field $k$ of characteristic $0$ introduced in~\cite[\S1]{LM}, i.e.~$\hh$ is a contravariant functor from the category $\mathbf{Sm}_k$ of smooth quasi-projective varieties over $k$ to the category of graded commutative rings which satisfies the axioms of \cite[Definition~1.1.2]{LM} (see also~\cite[Definition~2.1]{Vi17}). We assume in addition that $\hh$ satisfies the localization axiom \cite[\S2.(EXCI)]{Vi17} and it is generically constant in the sense of \cite[\S2.(CONST)]{Vi17}.

The word `oriented' here reflects the fact that there is a 1-1 correspondence between the Euler structures (characteristic classes) on $\hh$ (see \cite{Pa04}):
\begin{quote}
To each line bundle $\Ll$ over a variety $X$ one assigns the characteristic (Euler) class $c_1^{\hh}(\Ll):=s^*s_*(1_X) \in \hh^1(X)$, where $s$ is the zero-section of $\Ll$ and $s^*$ (resp. $s_*$) is the induced pull-back (resp. push-forward); 
\end{quote} 
and the choices of a local parameter $x^{\hh}$ (orientations):
\begin{quote}
Choose $x^{\hh}=c_1^{\hh}(\mathcal{O}(1)) \in \hh^1(\mathbb{P}^\infty)=R[[x]]^{(1)}$, where $R=\hh(pt)$, $pt=\mathrm{Spec}\, k$ and $\deg x=1$. Observe that $x^{\hh}=c_0x+c_1x^2+\ldots$, where $c_0$ is invertible in $R$ and $\deg c_i=-i$.
\end{quote}

Any such theory satisfies the Quillen formula \cite[Lemma~1.1.3]{LM}. Namely, given two line bundles $\Ll_1$, $\Ll_2$ over $X$ one has
\[
c_1^{\hh}(\Ll_1\otimes \Ll_2)=c_1^{\hh}(\Ll_1)+_F c_1^{\hh}(\Ll_2),
\]
where $F$ is the FGL~over the coefficient ring $R$. To respect the grading on $\hh$ we set $\deg a_{ij}=1-i-j$ for the coefficients $a_{ij}$ of $F$.

\begin{ex}
Examples of AOCTs are

(i) The Chow theory $\hh(X)=\CH(X;\Z)$ (Chow groups modulo rational equivalence) which corresponds to the additive FGL over $R=\Z$ \cite[Ex.1.1.4]{LM}; 

(ii) The graded $K$-theory $\hh(X)=K_0(X)[\be^{\pm 1}]$, where $\be$ is a formal variable (here we assign $\deg \be=-1$ and $\deg x=0$ for all $x\in K_0(X)$), which corresponds to the multiplicative periodic FGL over $R=\Z[\be^{\pm 1}]$ \cite[Ex.1.1.5]{LM}. 

(iii) The algebraic cobordism $\hh(X)=\Omega(X)$ of Levine-Morel~\cite{LM} which corresponds to the universal FGL over $R=\LL$.
\end{ex}

In the opposite direction, given a FGL~$F/R$ one defines an AOCT called a free theory by changing the coefficients 
\[
\hh(-)=\Omega(-)\otimes_{\LL} R.
\]
Observe that both the Chow theory and the graded K-theory are free theories.

\subsection{\it Multiplicative operations} Following \cite{Vi17} we now introduce multiplicative operations as follows.

Let $\hh_1$ and $\hh_2$ be AOCTs. According to \cite[Definition~3.6]{Vi17}, a multiplicative operation $\mathcal{C}\colon \hh_1 \to \hh_2$ is a natural transformation between $\hh_1$ and $\hh_2$ considered as contravariant functors from ${\bf Sm}_k$ to the category of rings (not necessarily graded).

We will extensively use \cite[Theorem~6.9]{Vi17} which says that 
\begin{quote}
If $\hh_1$ is a free theory, then there is a 1-1 correspondence between multiplicative operations $\hh_1\to \hh_2$ and morphisms $F_1/R_1\to F_2/R_2$ of FGLs.
\end{quote}

Namely, let $F_1/R_1$ and $F_2/R_2$ be the FGLs corresponding to $\hh_1$ and $\hh_2$ respectively. To a multiplicative operation $\mathcal{C}$ one assigns the morphism of FGLs
\[
(\mathcal{C}(pt),\gamma_{\mathcal{C}})\colon F_1/R_1\to F_2/R_2,
\]  
where $\mathcal{C}(pt)\colon R_1\to R_2$ is the induced map on the coefficient rings and $\gamma_{\mathcal{C}} \in R_2[[x]]$ is the power series which expresses $\mathcal{C}(x^{\hh_1})$ in terms of $x^{\hh_2}$. The series $\gamma_{\mathcal{C}}(x)$ defines the inverse Todd genus $\mathrm{itd}(x)=\gamma_{\mathcal{C}}(x)/x$ (see e.g. \cite{Pa04}). Observe that the composition of multiplicative operations corresponds to the composition of morphisms of FGLs. 

\begin{ex}
Set $\hh_1(-)=K_0(-)[\beta]$ to be the graded $K$-theory and $\hh_2(-)=\CH(-;\mathbb{Q})$ the Chow theory with rational coefficients. The Chern character is the multiplicative operation $\mathrm{ch} \colon \hh_1 \to \hh_2$ with $\gamma_{\mathrm{ch}}(x)=1-e^{-x}$.
\end{ex}

In the opposite direction (following \cite{Vi17}), given a morphism $(\phi,\gamma)\colon F_1/R_1 \to F_2/R_2$ of FGLs, one defines homomorphisms for all $r\ge 0$
\begin{equation}\label{eq:vfgl}
\mathcal{C}\colon R_1[[x_1^{\hh_1},\ldots,x_r^{\hh_1}]] \to R_2[[x_1^{\hh_2},\ldots,x_r^{\hh_2}]]
\end{equation}
by the rule
\[
\mathcal{C}(f(x_1^{\hh_1},\ldots,x_r^{\hh_1})):=\phi(f)(\gamma(x_1^{\hh_2}),\ldots,\gamma(x_r^{\hh_2})).
\]
Identifying $R_i[[x_1^{\hh_i},\ldots,x_r^{\hh_i}]]$ with  $\hh_i((\mathbb{P}^\infty)^{\times r})$ one obtains an operation $\mathcal{C}$ on products of projective spaces which one then extends uniquely to an operation $\mathcal{C}$ on ${\bf Sm}_k$ by \cite[Theorem~5.1]{Vi17}.

In the present work we exploit the same idea: 
Replacing $\hh((\mathbb{P}^\infty)^{\times r})$ by a $T$-equivariant AOCT of a point (here $T$ stands for a split torus of rank $r$) we extend $\mathcal{C}$ to a natural transformation $\mathcal{C}_T$ of $T$-equivariant AOCTs which we call a localized multiplicative operation.


\section{Twisted cohomology and multiplicative operations} \label{sec:twth}

\subsection{\it Twisted cohomology}
We recall the notion of a twisted theory $\tilde\hh(X)$ which goes back to~\cite{Qu71}. 
In the presentation we follow \cite[\S4.1.9]{LM} and \cite[\S4]{Me02}.

Let $\hh$ be a (graded) AOCT with coefficient ring $R$. Set $R[\mathfrak{t}]=R[t_1,t_2,\ldots]$ to be the graded polynomial ring in variables $t_i$ with $\deg t_i=-i$ (here we denote by $\mathfrak{t}$ the set of all variables). Consider a generic power series
\[
\lambda_{\mathfrak{t}}(x)=x+\sum_{i\ge 1} t_i x^{i+1}\in R[\mathfrak{t}][[x]].
\]  
It has a formal inverse $\lambda^{-1}_{\mathfrak{t}}(x)=x-t_1x^2+(2t_1^2-t_2)x^3+o(x^3)$ defined by
\[
\lambda_{\mathfrak{t}}(\lambda^{-1}_{\mathfrak{t}}(x))=x=\lambda^{-1}_{\mathfrak{t}}(\lambda_{\mathfrak{t}}(x)).
\]
We define a theory $\tilde\hh$ twisted by means of $\lambda_{\mathfrak{t}}$ as follows:

First, we extend coefficients over $R[\mathfrak{t}]$ and obtain an AOCT \[\hh[\mathfrak{t}](X)=\hh(X)\otimes_R R[\mathfrak{t}]\] with the same formal group law 
but considered over $R[\mathfrak{t}]$ which we denote by $F[\mathfrak{t}]$. 

Then, we set $\tilde \hh(X)$ to be the same graded ring as $\hh[\mathfrak{t}](X)$ but we redefine the characteristic classes as
\[
c_1^{\tilde\hh}(\mathcal{L}):=\lambda_{\mathfrak{t}}(c_1^{\hh}(\mathcal{L}))
\]
and the formal group law as
\[
\tilde F(x,y):=\lambda_{\mathfrak{t}}(F[\mathfrak{t}](\lambda^{-1}_{\mathfrak{t}}(x),\lambda^{-1}_{\mathfrak{t}}(y)    )).
\]
By definition, $\lambda_{\mathfrak{t}}\colon F[\mathfrak{t}] \to \tilde F$ is a morphism of FGLs over $R[\mathfrak{t}]$. 

\begin{ex}\label{ex:LN}
Observe that a matrix expressing the coefficients $\tilde a_{ij}\in R[\mathfrak{t}]$ of the twisted FGL
\[
\tilde F(x,y)=x+y+\sum_{i,j\ge 1} \tilde a_{ij}x^iy^j
\]
in terms of the coefficients $a_{i'j'}$ is a unitriangular matrix with coefficients $p_{i'j'} \in R[\mathfrak{t}]$ given by homogeneous polynomials:
\[
\tilde a_{ij} = a_{ij}+ \sum_{i'+j'<i+j}a_{i'j'} p_{i'j'},\quad \deg p_{i'j'}=i+j-i'-j'.
\]
In particular, for the first two coefficients we have
\[
\tilde a_{11}  =a_{11}+2t_1,\qquad  \tilde a_{12} =a_{12}+a_{11}t_1-2t_1^2.
\]
\end{ex}

\begin{rem} 
The theory $\tilde\hh$ has the same pull-backs as $\hh$ (but it has twisted push-forwards). Moreover, the twisted theory $\widetilde{\CH}$ can be viewed as an approximation for the algebraic cobordism $\Omega$ (the Lazard ring $\LL$ embeds into $\widetilde{\CH}(pt)$).
\end{rem}

\subsection{\it Landweber-Novikov operations} Consider the algebraic cobordism $\Omega$.
Following \cite[\S10]{Me02} we define the total Landweber-Novikov (LN) operation $\mathcal{C}^\Omega\colon \Omega \to \Omega[\mathfrak{t}]$ and the LN operations 
\[
\mathcal{C}_I^\Omega\colon \Omega^*(X) \to \Omega^{*+|I|}(X)
\] 
corresponding to partitions  $I=(i_1\ge i_2\ge \ldots \ge i_m>0)$, where $|I|=i_1+\ldots+i_m$ is the degree of $I$, as follows:

Consider the twisted theory $\tilde\Omega$. Since it is the AOCT, by universality of $\Omega$ there is the ring homomorphism $\Omega(X) \to \widetilde{\Omega}(X)$ which maps $c_1^\Omega(\Ll)$ to $c_1^{\widetilde{\Omega}}(\Ll)$. Identifying $\tilde\Omega(X)$ with $\Omega[\mathfrak{t}](X)$ as graded rings we obtain the total LN operation $\mathcal{C}^\Omega$ which is the graded multiplicative operation corresponding to the morphism $(\phi,\lambda_{\mathfrak{t}})\colon F_U \to F_U[\mathfrak{t}]$ of FGLs, where $\phi(a_{ij})=\tilde a_{ij}$ (cf. \cite[Example~3.9]{Vi17}).

Taking the coefficient at the monomial $\mathfrak{t}_I=t_{i_1}t_{i_2}\ldots t_{i_m}$ of the image of $\mathcal{C}^\Omega$ gives the group homomorphism $\mathcal{C}_I^\Omega\colon \Omega^*(X) \to \Omega^{*+|I|}(X)$. In other words, we have
\[
\mathcal{C}^\Omega(x)=\sum_I \mathcal{C}_I^\Omega(x) \mathfrak{t}_I.
\]

\begin{ex}\label{ex:LNP}
In particular, if $X=pt$, then $\mathcal{C}_I^\Omega(a_{ij})$ is the coefficient at $\mathfrak{t}_I$ in the expansion of $\tilde a_{ij}$ in terms of $a_{i'j'}$ of Example~\ref{ex:LN}.

Suppose $X=\mathbb{P}^n$. Then $\Omega(X)=\mathbb{L}[c]/(c^{n+1})\in \Omega^1(X)$, where $c=c_1^{\Omega}(\mathcal{O}(1)) \in \Omega^1(X)$. By definition $\mathcal{C}^\Omega\colon \Omega(X) \to \Omega[\mathfrak{t}](X)$ is the graded ring homomorphism which maps $a_{ij} \mapsto \tilde{a}_{ij}$ and $c\mapsto \lambda_{\mathfrak{t}}(c)=c+t_1c^2+t_2c^3+\ldots$.
\end{ex}

More generally, instead of taking the coefficient at $\mathfrak{t}_I$ we can apply an arbitrary graded $\LL$-linear map $\psi\colon \LL[\mathfrak{t}] \to \LL$ of degree $l$. This gives a group homomorphism 
\[
\mathcal{C}^\Omega_\psi \colon \Omega^*(X) \stackrel{\mathcal{C}^\Omega}\longrightarrow \Omega[\mathfrak{t}](X) \stackrel{-\otimes \psi}\longrightarrow \Omega^{*+l}(X)
\]
which is a $\LL$-linear combination of LN operations corresponding to partitions. 

\begin{rem}
Following \cite{Vi17} an additive operation is a natural transformation of functors $\hh_1^n \to \hh_2^m$ with values in the category of abelian group. The LN operations corresponding to partitions provide examples of additive operations. Moreover, by \cite[Theorem~3.18]{Vi17} all additive operations can be described as certain $\LL\otimes_{\Z} \mathbb{Q}$-linear combinations of LN operations corresponding to partitions.
\end{rem}

\subsection{\it Chow traces of LN operations} 
We follow \cite[\S3]{Me03}. Fix a prime $p$. Consider the Chow theory with coefficients in $R=\mathbb{F}_p$ denoted by $\mathrm{Ch}(-)$. Set $R[\mathfrak{t}']=R[t_{p-1},t_{p^2-1},\ldots]$, where $\mathfrak{t}'$ denote the subset of $\mathfrak{t}$ consisting of all variables indexed by integers of the form $p^r-1$, $r\ge 1$. Consider the theory $\widetilde{\mathrm{Ch}}$ twisted by means of  $\lambda_{\mathfrak{t}'}(x)=x+t_{p-1}x^p+t_{p^2-1}x^{p^2}+\ldots \in R[[x]]$. 

It turns out that the twisted FGL $\tilde{F}$ coincides with $F[\mathfrak{t}']$, hence it is also additive. By universality of $\mathrm{Ch}$ (among all theories with additive FGL) there is a ring homomorphism $\mathrm{Ch}(X) \to \widetilde{\mathrm{Ch}}(X)$. Identifying $\widetilde{\mathrm{Ch}}(X)$ with $\mathrm{Ch}[\mathfrak{t}'](X)$ as graded rings we obtain a graded multiplicative operation $\mathcal{C}^{\mathrm{Ch}}\colon \mathrm{Ch}\to \mathrm{Ch}[\mathfrak{t}']$ which we call the total Chow trace (of LN operation). Observe that $\mathcal{C}^{\mathrm{Ch}}$ corresponds to the morphism $(\mathrm{id},\lambda_{\mathfrak{t}'})\colon F\to F[\mathfrak{t}']$ of FGLs.

Let $I$ be a partition which consist only of integers of the form $p^r-1$, $r\ge 1$. Taking the coefficient at the monomial $\mathfrak{t}'_I$ we obtain a $\mathbb{F}_p$-linear map $\mathcal{C}^{\mathrm{Ch}}_I\colon \mathrm{Ch}(X) \to \mathrm{Ch}(X)$ which we call the Chow trace corresponding to $I$ and by definition, we have
\[
\mathcal{C}^{\mathrm{Ch}}(x)=\sum_I \mathcal{C}^{\mathrm{Ch}}_I(x)\mathfrak{t}'_I.
\]

Observe that if $I$ consists of $i\ge 1$ integers $p-1$, then $\mathcal{C}^{\mathrm{Ch}}_I$ is the usual $i$-th Steenrod operation $\mathrm{St}^i$ mod $p$.
\begin{ex}
For the projective space $X=\mathbb{P}^n$ of Example~\ref{ex:LNP} we have $\mathrm{Ch}(X)=\mathbb{F}_p[c]/(c^{n+1})$ and $\mathcal{C}^{\mathrm{Ch}}(c)=c+t_{p-1}c^p+t_{p^2-1}c^{p^2}+\ldots$.
\end{ex}

There is a commutative diagram of morphisms of FGLs 
\[
\xymatrix{
F_U \ar[r]^{(\phi,\lambda_{\mathfrak{t}})}\ar[d]_{(\phi_0,\mathrm{id})} & F_U[\mathfrak{t}] \ar[d]^{\mathfrak{t} \to \mathfrak{t}'}\\
F \ar[r]^{(\mathrm{id},\lambda_{\mathfrak{t}'})}& F[\mathfrak{t'}],
}
\]
where $\phi(a_{ij})=\tilde{a}_{ij}$, $\phi_0(a_{ij})=0$, which gives rise to a commutative diagram of the respective operations 
\[
\xymatrix{
\Omega^*(X) \ar[r]^{\mathcal{C}_I^\Omega}\ar[d] & \Omega^{*+|I|}(X) \ar[d]\\
 \mathrm{Ch}^*(X) \ar[r]^{\mathcal{C}_I^{\mathrm{Ch}}}& \mathrm{Ch}^{*+|I|}(X)
}.
\]

\begin{rem}
The Steenrod operations can be also constructed as follows (see \cite[\S6.4]{Vi17}):

Consider a multiplicative operation $\mathrm{St}\colon \mathrm{Ch}\to \mathrm{Ch}[t]$, $\deg t=1$, corresponding to the morphism of FGLs $(\mathrm{id},-t^{p-1}x +x^p)\colon F\to F[t]$, where $F[t]$ is the additive FGL  over $\mathbb{F}_p[t]$. Observe that $\mathrm{St}$ does not preserve the grading. Then $\mathrm{St}^i\colon \mathrm{Ch}^m \to \mathrm{Ch}^{m+i(p-1)}$ is given by taking the coefficient at $t^{(m-i)(p-1)}$ of the image of $\mathrm{St}$.
\end{rem}


\section{Localized operations on $T$-equivariant theories}\label{sec:main}

\subsection{\it Equivariant coefficient rings}
Let $\Lambda$ be a free abelian group of finite rank and let $F$ be a FGL over a ring $R$. 
Following \cite[\S2]{CPZ} consider the polynomial ring $R[x_\la]_{\la\in \Lambda}$ in variables corresponding to elements of $\Lambda$. Let $R[[x_\lambda]]_{\lambda\in \Lambda}$ be its completion with respect to the kernel of the augmentation map $x_\la\mapsto 0$. Set
\[
S_F(\Lambda):=R[[x_\la]]_{\la\in \Lambda}/\overline{J},
\]
where $\overline{J}$ is the closure of the ideal of relations $x_0=0$ and $x_{\la+\mu}=x_\la+_F x_\mu$, $\la,\mu\in \Lambda$. The ring $S_F(\Lambda)$ is called the formal group ring associated to the lattice $\Lambda$ and to the FGL $F$ over $R$. 

\begin{ex} 
(i) For the additive FGL $S_F(\Lambda)=\mathrm{Sym}_R(\Lambda)^\wedge$ is the usual completed symmetric algebra of $\Lambda$ over $R$ \cite[Example~2.19]{CPZ}.

(ii) For the multiplicative periodic FGL, $S_F(\Lambda)=R[\Lambda]^\wedge$ is the completed group ring of $\Lambda$ over $R$ \cite[Example~2.20]{CPZ}.
\end{ex}

The following Lemma~\ref{lem:fgr} provides an analogue of the map~\eqref{eq:vfgl} for formal group rings
\begin{lem}\label{lem:fgr} 
Let $(\phi,\gamma)\colon F_1/R_1 \to F_2/R_2$ be a morphism of FGLs. 

Then there is an induced ring homomorphism $\mathcal{C}\colon S_{F_1}(\Lambda) \to S_{F_2}(\Lambda)$ such that $\mathcal{C}(x_\la)\in x_\la S_{F_2}(\Lambda)$, for all $\la\in\Lambda$. 
\end{lem}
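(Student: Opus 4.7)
The plan is to induce $\mathcal{C}$ from a continuous ring homomorphism of the free (completed) polynomial rings, then verify it descends to the quotients. Write the augmentation ideal of $S_{F_i}(\Lambda)=R_i[[x_\lambda]]_{\lambda\in\Lambda}/\overline{J_i}$ as $I_i=(x_\lambda)_{\lambda\in\Lambda}$, for $i=1,2$.

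I first record that $\gamma$ has no constant term, so $\gamma(x)=x\,g(x)$ for some $g(x)\in R_2[[x]]$. (This is the standard convention for a morphism of FGLs, and is visible in all examples of the paper, e.g.\ $\gamma_{\mathrm{ch}}=1-e^{-x}$ for the Chern character and $\gamma=\lambda_{\mathfrak{t}}$ for the total LN operation.) In particular $\gamma(x_\lambda)\in I_2$.

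Next I define a ring homomorphism on the uncompleted polynomial ring $R_1[x_\lambda]_{\lambda\in\Lambda}\to R_2[[x_\lambda]]_{\lambda\in\Lambda}$ by $r\mapsto\phi(r)$ on constants and $x_\lambda\mapsto\gamma(x_\lambda)$. Because $\gamma(x_\lambda)\in I_2$, this sends $I_1^n$ into $I_2^n$ and so is continuous for the augmentation-adic topologies, extending uniquely to a continuous ring homomorphism $\widetilde{\mathcal{C}}\colon R_1[[x_\lambda]]_{\lambda\in\Lambda}\to R_2[[x_\lambda]]_{\lambda\in\Lambda}$, which I then compose with the quotient onto $S_{F_2}(\Lambda)$.

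The key verification is that $\widetilde{\mathcal{C}}$ carries $\overline{J_1}$ into $\overline{J_2}$. The image of $x_0$ is $\gamma(0)=0$, while on $x_{\lambda+\mu}-F_1(x_\lambda,x_\mu)$ the image equals
\[
\gamma(x_{\lambda+\mu})-\phi(F_1)\bigl(\gamma(x_\lambda),\gamma(x_\mu)\bigr)=\gamma(x_{\lambda+\mu})-\gamma\bigl(F_2(x_\lambda,x_\mu)\bigr)
\]
by the morphism identity for $(\phi,\gamma)$, and this vanishes modulo $\overline{J_2}$ because $x_{\lambda+\mu}\equiv F_2(x_\lambda,x_\mu)$ there. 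The main technical point I expect to navigate is that this containment on \emph{generators} must propagate to $\widetilde{\mathcal{C}}(\overline{J_1})\subseteq\overline{J_2}$; this is exactly why I build $\widetilde{\mathcal{C}}$ as a continuous map on completed rings from the outset, so that continuity transports the inclusion on generators to the inclusion on closures. The induced homomorphism $\mathcal{C}\colon S_{F_1}(\Lambda)\to S_{F_2}(\Lambda)$ then satisfies $\mathcal{C}(x_\lambda)=\gamma(x_\lambda)=x_\lambda\,g(x_\lambda)\in x_\lambda\,S_{F_2}(\Lambda)$, as desired.
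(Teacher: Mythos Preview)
Your proof is correct and follows essentially the same route as the paper's. The paper simply cites the functoriality of the formal group ring from \cite[\S2, Lemma~2.6]{CPZ} and writes $\mathcal{C}=\gamma^\star\circ\phi_\star$, factoring through the intermediate ring $S_{\phi(F_1)}(\Lambda)$; you instead unpack this directly by building the continuous map $r\mapsto\phi(r)$, $x_\lambda\mapsto\gamma(x_\lambda)$ on the completed polynomial ring and checking it respects the defining relations, which is precisely the content of that cited lemma.
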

We call the map $\mathcal{C}$ the operation on formal group rings corresponding to $(\phi,\gamma)$.

\begin{proof}
By \cite[\S2]{CPZ} the formal group ring is functorial with respect to all the parameters: coefficients,  lattices and FGLs. Set $\mathcal{C}=\gamma^\star\circ \phi_\star$ to be the composite of functorial maps of \cite[Lemma~2.6]{CPZ} induced by $\phi \colon R_1\to R_2$ and $\gamma\colon F_2 \to \phi(F_1)$. Since $\phi_\star\colon S_{F_1}(\Lambda) \to S_{\phi(F_1)}(\Lambda)$, $x_\lambda\mapsto x_\lambda$ and $\gamma^\star\colon S_{\phi(F_1)}(\Lambda) \to S_{F_2}(\Lambda)$, $x_\lambda \mapsto \gamma(x_\lambda)$, we obtain $\mathcal{C}(x_\la)\in x_\la S_{F_2}(\Lambda)$. 
\end{proof}

Following~\cite[\S5]{HML} given a free AOCT $\hh$ with the FGL $F/R$, a split torus $T$ and a smooth $T$-variety $X$ over $k$, one constructs the associated equivariant oriented cohomology $\hh_T(X)$ as the limit of usual oriented cohomology taken over a certain system of $T$-representations (the Borel construction). We refer to \cite{To,EG,Br97} for the definition and properties of the equivariant Chow theory and $K$-theory; and to \cite{Kr12} for the $T$-equivariant algebraic cobordism.
All such $T$-equivariant theories satisfy the axioms of \cite[\S2]{CZZ2}. 
For simplicity of the exposition we assume
that our theories are Chern-complete, i.e. $\hh_T(pt)=\hh_T(pt)^\wedge$.

According to \cite[\S3]{CZZ2} the formal group ring of the character group $\Lambda=T^*$ can be identified with the  $T$-equivariant coefficient ring $\hh_T(pt)$, i.e. 
\[
\hh_T(pt) \simeq S_F(T^*).
\]
Under this identification $x_\la$ in the definition of the formal group ring corresponds to the first characteristic class $c_1^{\hh}(\Ll_\la)$ of the associated $T$-equivariant line bundle $\Ll_\la$ over $pt$. 

\subsection{\it The fixed loci}
Let $G$ be a split semi-simple linear algebraic group $G$ over a field $k$ of characteristic $0$ together with a a split maximal torus $T$ and a Borel subgroup $B$ containing $T$. Consider the associated finite crystallographic (semisimple) root datum $\Sigma$ together with its subset of simple roots $\Delta$ and the character lattice $\Lambda=T^*$. The Weyl group $W$ of $\Sigma$ acts on $T^*$ and, hence, by functoriality on the formal group ring $S_F(T^*)$. This action coincides with the natural action of $W$ on $\hh_T(pt)$.

Set $S=S_F(T^*)$.
We define the twisted group algebra $S_W$ to be the free left $S$-module $S\otimes_R R[W]$ so that each element of $S_W$ can be written uniquely as a linear combination $\sum_{w\in W} p_w\de_w$, where $\{\de_w\}_{w\in W}$ is the standard basis of the group ring $R[W]$ and $p_w\in S$. 
It has the product induced  by that of $R[W]$ and the twisted commuting relation $w(p)\de_w =\de_w p$,  $p\in S$.

Consider its $S$-linear dual $S_W^\star=\Hom_S (S_W,S)$. By definition $S_W^\star$ is a free $S$ module with basis $\{f_w\}_{w\in W}$ given by $f_w(v)=\delta_{w,v}$. 
It can be identified with
\[
\Hom_S (S_W,S)= \Hom(W,S) = \bigoplus_{w\in W} S,\qquad \sum_w p_wf_w \mapsto (p_w)_{w\in W},
\]
so that $S_W^\star$ is a commutative $S$-algebra where the product is given by the usual point-wise multiplication of functions $W\to S$. In particular, the unit in $S_W^\star$ is given by $\sum_w f_w$.
 
Now let $\Theta\subset \Delta$ and $W_\Theta$ be the subgroup of $W$ generated by simple reflections corresponding to $\Theta$. Similar to $S_W$ we define the parabolic $S$-module $S_{W/W_\Theta}$ (see \cite[\S11]{CZZ1}) so that its dual $S_{W/W_\Theta}^\star$ is a commutative $S$-algebra with a basis $\{f_{\bar{w}}\}_{\bar w}\in W/W_\Theta$ indexed by cosets $W/W_\Theta$. 

\begin{dfn}
Let $(\phi,\gamma)\colon F_1/R_1 \to F_2/R_2$ be a morphism of FGLs. Let $S_i=S_{F_i}(T^*)$, $i=1,2$ denote the respective formal group ring. Taking the direct sum over $W/W_\Theta$ of the operations of Lemma~\ref{lem:fgr} defines a ring homomorphism 
\[
\mathcal{C}_{W/W_\Theta}\colon  (S_{1})_{W/W_\Theta}^\star \to (S_2)_{W/W_\Theta}^\star,\quad \mathcal{C}_{W/W_\Theta}(\sum_{\bar{w}} p_{\bar{w}} f_{\bar{w}})=\sum_{\bar{w}} \mathcal{C}(p_{\bar{w}})f_{\bar{w}}
\] 
which we call the operation on fixed loci induced by $(\phi,\gamma)$.
\end{dfn}

\subsection{\it Equivariant cohomology}
Following  \cite{CZZ,CZZ1,CZZ2} we recall computations (which generalize computations by Kostant-Kumar \cite{KK86, KK90}) of the $T$-equivariant AOCT of the variety of complete flags $\hh_T(G/B)$. For these computations to work we need to assume that 
\begin{quote}
The formal group ring $S_F(T^*)$ is $\Sigma$-regular of \cite[Definition~4.4]{CZZ}.
\end{quote} 
In particular, this holds if $2$ is a regular element in $R$ or if the root system does not contain any symplectic root subsystem as an irreducible component.

As the first step consider the localizations at all $x_\al$'s, $\alpha\in \Sigma$
\[
Q=S[\tfrac{1}{x_\al},\al\in \Sigma]\quad\text{ and }\quad Q_W=Q \otimes_R R[W].
\]
Following \cite[\S6]{HMSZ} we define the formal affine Demazure algebra $\DF$ to be the subalgebra of the localized twisted group algebra $Q_W$ generated by elements of $S\subset Q_W$ and by the push-pull elements 
\[
Y_\alpha=\tfrac{1}{x_{-\al}}+\tfrac{1}{x_{\al}}\de_{s_{\alpha}}\in Q,
\] 
for all  roots $\al$ and the corresponding  reflections $s_\alpha\in W$. 

\begin{ex} (cf. \cite[\S7]{HMSZ}) 
For the additive FGL, $\DF$ is the (completed) nil-Hecke algebra. For the multiplicative FGL, $\DF$ is the (completed) $0$-affine Hecke algebra.
\end{ex}

Now by \cite[Theorem~10.7]{CZZ1} the natural inclusion $S_W \hookrightarrow \DF$ induces a ring inclusion $\DFd \hookrightarrow S_W^\star$ on the $S$-linear duals (called a restriction to fixed locus). By \cite[Theorem~8.2]{CZZ2} the $T$-equivariant AOCT of $G/B$ can be identified with $\DFd$ in such a way that the restriction to  fixed locus coincides with the pull-back $\hh_T(G/B) \hookrightarrow \hh_T((G/B)^T)$.

Finally, consider the projective homogeneous space $X=G/P_\Theta$, where $\Theta\subset \Delta$ and $P_\Theta$ is the standard parabolic subgroup containing $B$. 
There is the parabolic analogue of all the above results (see \cite{CZZ1} and \cite{CZZ2}) which says that there is the ring inclusion
\[
\hh_T(G/P_\Theta)\simeq \mathbb{D}_{F,\Theta}^\star \hookrightarrow S_{W/W_\Theta}^\star \simeq \hh_T((G/P_\Theta)^T), 
\]
where $\mathbb{D}_{F,\Theta}$ is the respective parabolic module defined in \cite[\S11]{CZZ1}.

\subsection{\it The forgetful map} 
We now recall the construction of the forgetful map (see e.g. \cite[\S11]{CZZ}). We assume for simplicity $\Theta=\emptyset$.

Let $\mathcal{I}$ denote the kernel of the augmentation map $\epsilon\colon S\to R$. Consider the quotient map (here $\DFd$ is an $S$-module)
\[
\varepsilon\colon \DFd \to \DFd/\mathcal{I}\DFd.
\]
Given $w\in W$ and its reduced expression 
\[
w=s_{\alpha_{i_1}}s_{\alpha_{i_2}}\ldots s_{\alpha_{i_l}},\quad \alpha_{i_j}\in \Delta,
\] 
we set $I_w=(i_1,i_2,\ldots,i_l)$ and $Y_{I_w}=Y_{\alpha_{i_1}}\ldots Y_{\alpha_{i_l}}$ in $Q_W$. By \cite[Prop.~7.7]{CZZ} the set $\{Y_{I_w}\}_{w}$ forms an $S$-basis of the algebra $\DF$. By \cite[\S10]{CZZ1} the $Q$-dual set $\{Y_{I_w}^*\}_w$ forms an $S$-basis of the dual $\DFd$. Finally, the set $\{\varepsilon(Y_{I_w}^*)\}_w$ forms an $R$-basis of the quotient which can be identified with the usual AOCT $\hh(G/B)$ (see e.g. \cite[\S11]{CZZ}).

\begin{rem}
Instead of the basis $\{Y_{I_w}^*\}_w$ we can take any of the $S$-bases of $\DFd$ discussed in \cite[\S15]{CZZ1}. In each case the map $f$ sends the respective coefficients $p_w\in S$ to their `constant terms' $\epsilon(p_w)\in R$. For instance, we can take the Schubert basis $\{\zeta_{I_w}=Y_{I_w}([pt])\}_w$ which is Poincar\'e dual to $\{Y_{I_w}^*\}_w$.
\end{rem}

In view of the above identifications the map $\varepsilon$ coincides with the usual forgetful map
\[
\hh_T(G/B)\simeq \DFd\longrightarrow \DFd/\mathcal{I}\DFd \simeq \hh(G/B).
\]
As before, this description can be extended to the parabolic situation in which case we obtain the forgetful map
\[
\varepsilon\colon \hh_T(G/P_\Theta)\simeq \mathbb{D}_{F,\Theta}^\star  \longrightarrow  \mathbb{D}_{F,\Theta}^\star/\mathcal{I} \mathbb{D}_{F,\Theta}^\star \simeq \hh(G/P_\Theta).
\]

\begin{rem} 
The fact that the quotient $\mathbb{D}_{F,\Theta}^\star/\mathcal{I}\mathbb{D}_{F,\Theta}^\star$ for the additive FGL coincides with the usual Chow ring $CH(G/P_\Theta)$ and the map $\varepsilon$ coincides with the forgetful map $CH_T(G/P_\Theta) \to CH(G/P_\Theta)$ can be also found in \cite[(5.12)]{KK86}.
\end{rem}

\subsection{\it The characteristic map}
The Weyl group $W$ acts by $\Z$-linear automorphisms of $\Lambda$ and any such automorphism extends to an automorphism of the formal group ring $S=S_F(\Lambda)$. Let $\Theta\subset \Delta$ and $W_\Theta$ be the subgroup of $W$ generated by simple reflections corresponding to $\Theta$. Denote by $S^{W_\Theta}$ the subring of invariants. Consider the characteristic map 
\[
c\colon S^{W_\Theta} \longrightarrow S_{W/W_\Theta}^\star,\quad  p\mapsto (w(p))_{w}
\] 
and the Borel map 
\[
\rho \colon S\otimes_{S^W} S^{W_\Theta} \longrightarrow S_{W/W_\Theta}^\star,\quad  p_1\otimes p_2\mapsto p_1c(p_2).
\] 
Since the functorial maps on formal group rings induced by $W$-automorphisms commute with the maps induced by morphisms of formal group laws, we obtain

\begin{lem}\label{lem:char} 
Operations on fixed loci commute with the characteristic map and the Borel map, i.e. $c\circ \mathcal{C}=\mathcal{C}_{W/W_\Theta} \circ c$ and $\rho\circ (\mathcal{C}\otimes \mathcal{C})=\mathcal{C}_{W/W_\Theta} \circ \rho$.
\end{lem}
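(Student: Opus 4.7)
The proof hinges on a single underlying fact: the operation $\mathcal{C}\colon S_1\to S_2$ produced by Lemma~\ref{lem:fgr} is $W$-equivariant. I would first establish this, then derive both claims by componentwise manipulation in $S_{W/W_\Theta}^\star=\bigoplus_{\bar w}S$.

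For $W$-equivariance, recall that $\mathcal{C}=\gamma^\star\circ\phi_\star$ is assembled from the functoriality of the formal group ring $S_F(\Lambda)$ in its three arguments: the coefficient ring, the FGL, and the lattice. The Weyl group acts on $S_i=S_{F_i}(T^*)$ by $\Z$-linear automorphisms of $T^*$ extended via lattice-functoriality ($w(x_\la)=x_{w(\la)}$). The key observation, to which the hint preceding the lemma already appeals, is that the three flavors of functoriality from \cite[Lemma~2.6]{CPZ} commute pairwise; one sees this on generators since both $\phi_\star$ and $\gamma^\star$ send $x_\la$ to a power series in $x_\la$ with coefficients lying in the target coefficient ring, and a $W$-automorphism simply permutes the subscripts. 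Consequently $\mathcal{C}\circ w=w\circ\mathcal{C}$ on $S_1$; in particular $\mathcal{C}$ restricts to ring homomorphisms $S_1^W\to S_2^W$ and $S_1^{W_\Theta}\to S_2^{W_\Theta}$, so $\mathcal{C}\otimes\mathcal{C}$ is well-defined on the source of the Borel map.

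For the characteristic-map identity, given $p\in S_1^{W_\Theta}$ both $c(\mathcal{C}(p))$ and $\mathcal{C}_{W/W_\Theta}(c(p))$ expand in the basis $\{f_{\bar w}\}$; their $\bar w$-components are $w(\mathcal{C}(p))$ and $\mathcal{C}(w(p))$ respectively, which coincide by $W$-equivariance. For the Borel map, $\rho(p_1\otimes p_2)=\sum_{\bar w}p_1\,w(p_2)f_{\bar w}$; applying $\mathcal{C}_{W/W_\Theta}$ componentwise and using that $\mathcal{C}$ is a ring map produces $\sum_{\bar w}\mathcal{C}(p_1)\,\mathcal{C}(w(p_2))f_{\bar w}$, which by $W$-equivariance equals $\sum_{\bar w}\mathcal{C}(p_1)\,w(\mathcal{C}(p_2))f_{\bar w}=\mathcal{C}(p_1)\cdot c(\mathcal{C}(p_2))=\rho((\mathcal{C}\otimes\mathcal{C})(p_1\otimes p_2))$.

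The only step requiring care is the $W$-equivariance of $\mathcal{C}$: one must confirm that the $W$-action on $S_F(\Lambda)$ is genuinely the one coming from lattice-functoriality, so that it commutes a priori with the coefficient and FGL functorialities underlying $\phi_\star$ and $\gamma^\star$. Once this identification is in place, the rest of the proof is a bookkeeping exercise in the coordinates on $S_{W/W_\Theta}^\star$ and never leaves the level of generators.
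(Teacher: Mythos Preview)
Your proposal is correct and follows exactly the approach the paper takes: the paper's entire argument is the single sentence preceding the lemma, namely that ``the functorial maps on formal group rings induced by $W$-automorphisms commute with the maps induced by morphisms of formal group laws,'' which is precisely your $W$-equivariance of $\mathcal{C}$. You have simply unpacked this into the componentwise verification for $c$ and $\rho$ that the paper leaves implicit.
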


We are now ready to state and to prove our key result
\begin{thm}\label{thm:main}
Let $(\phi,\gamma)\colon F_1/R_1 \to F_2/R_2$ be a morphism of FGLs. Let $G$ be a split semisimple linear algebraic group over a field of characteristic $0$ together with a maximal split torus $T$ and a Borel subgroup $B$ containing $T$. Let $\Sigma$ be the respective root system together with a subset $\Theta$ of the set of simple roots $\Delta$. Let $P_\Theta$ be the parabolic subgroup for $\Theta$ and $G/P_\Theta$ the respective projective homogeneous variety.  Let $S_1$ and $S_2$ be the formal group rings associated to $F_1$, $F_2$ and the character lattice $\Lambda=T^*$. Assume that both $S_1$ and $S_2$ are $\Sigma$-regular.

(i) The operation on fixed loci  
$\mathcal{C}_{W/W_\Theta}\colon (S_1)_{W/W_\Theta}^\star \to (S_2)_{W/W_\Theta}^\star$ restricts to a ring homomorphism between the respective $T$-equivariant oriented cohomology theories 
\[
\mathcal{C}_T\colon (\hh_1)_T(G/P_\Theta) \to (\hh_2)_T(G/P_\Theta),
\]
where $\hh_1$ is a free theory for the FGL $F_1$ and $\hh_2$ is a theory with the FGL $F_2$. Moreover, $\mathcal{C}_T$ commutes with the characteristic and the Borel map.

(ii) There is a commutative diagram
\[
\xymatrix{
(\hh_1)_T(G/P_\Theta) \ar[d]_{\mathcal{C}_T} \ar[r]^\varepsilon & \hh_1(G/P_\Theta)  \ar[d]^{\mathcal{C}} \\
(\hh_2)_T(G/P_\Theta) \ar[r]^\varepsilon  & \hh_2(G/P_\Theta)  
}
\]
where $\varepsilon$ is the forgetful map and $\mathcal{C}$ is the multiplicative operation induced by $(\phi,\gamma)$.
\end{thm}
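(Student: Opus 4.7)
The plan for part (i) is to exploit the Borel presentation of the $T$-equivariant oriented cohomology of $G/P_\Theta$ for the free theory $\hh_1$: the Borel map
\[
\rho_1\colon S_1\otimes_{S_1^W} S_1^{W_\Theta}\longrightarrow (S_1)_{W/W_\Theta}^\star
\]
surjects onto the subring $\mathbb{D}_{F_1,\Theta}^\star\simeq (\hh_1)_T(G/P_\Theta)$. Lemma~\ref{lem:char} yields $\mathcal{C}_{W/W_\Theta}\circ \rho_1=\rho_2\circ(\mathcal{C}\otimes \mathcal{C})$, so $\mathcal{C}_{W/W_\Theta}$ carries $\mathrm{im}(\rho_1)=\mathbb{D}_{F_1,\Theta}^\star$ into $\mathrm{im}(\rho_2)\subseteq \mathbb{D}_{F_2,\Theta}^\star\simeq (\hh_2)_T(G/P_\Theta)$. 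This defines the restriction $\mathcal{C}_T$ as a ring homomorphism, and the asserted compatibility with the characteristic and Borel maps is then Lemma~\ref{lem:char} read on these subrings.

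For part (ii), I would first check that $\mathcal{C}_T$ respects augmentation filtrations. By Lemma~\ref{lem:fgr}, $\mathcal{C}(x_\la)\in x_\la S_2$, hence $\mathcal{C}(\mathcal{I}_1)\subseteq \mathcal{I}_2$, where $\mathcal{I}_i\subset S_i$ denotes the augmentation ideal. Since $\mathcal{C}_{W/W_\Theta}$ acts componentwise via $\mathcal{C}$ on the $S_i$-coordinates, it sends $\mathcal{I}_1\mathbb{D}_{F_1,\Theta}^\star$ into $\mathcal{I}_2\mathbb{D}_{F_2,\Theta}^\star$, and therefore $\mathcal{C}_T$ descends to a ring homomorphism $\bar{\mathcal{C}}\colon \hh_1(G/P_\Theta)\to \hh_2(G/P_\Theta)$ making the square with the forgetful maps commute.

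It remains to identify $\bar{\mathcal{C}}$ with the non-equivariant operation $\mathcal{C}$ produced by Vishik's Theorem~6.9 from $(\phi,\gamma)$, which I would do by comparing the two on a set of generators. For each $\la\in T^*$ the equivariant characteristic class $c_1^{(\hh_1)_T}(\Ll_\la)$ is identified with $x_\la\in S_1\subset (S_1)_{W/W_\Theta}^\star$, and $\mathcal{C}_T$ sends its image in $\mathbb{D}_{F_1,\Theta}^\star$ to $\gamma(x_\la)=\gamma(c_1^{(\hh_2)_T}(\Ll_\la))$. Applying $\varepsilon$ gives $\bar{\mathcal{C}}(c_1^{\hh_1}(\Ll_\la))=\gamma(c_1^{\hh_2}(\Ll_\la))=\mathcal{C}(c_1^{\hh_1}(\Ll_\la))$, the last equality being the defining property of $\mathcal{C}$ from Section~\ref{sec:fgl}. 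Since $\hh_1$ is free and both maps act on $R_1$ as $\phi$, and $\hh_1(G/P_\Theta)$ is generated as an $R_1$-algebra by such characteristic classes, we conclude $\bar{\mathcal{C}}=\mathcal{C}$.

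The main obstacle is the input that the Borel map $\rho_1$ surjects onto $\mathbb{D}_{F_1,\Theta}^\star$ for the free theory $\hh_1$, together with the analogous non-equivariant generation statement used to pin down $\bar{\mathcal{C}}$. These are the Borel-type presentations of oriented cohomology of projective homogeneous varieties established in the \cite{CZZ1,CZZ2} series; once granted, the rest of the argument is a formal manipulation with Lemmas~\ref{lem:fgr} and~\ref{lem:char}.
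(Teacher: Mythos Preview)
Your approach to (i) has a genuine gap: the Borel map $\rho_1\colon S_1\otimes_{S_1^W}S_1^{W_\Theta}\to (S_1)_{W/W_\Theta}^\star$ is \emph{not} in general surjective onto $\mathbb{D}_{F_1,\Theta}^\star$. This is precisely the torsion-index obstruction: already for $\hh_1=\CH$ and $G$ of type $G_2$, $F_4$, $\mathrm{Spin}$, $E_6$, $E_7$, $E_8$, the image of the characteristic (hence Borel) map is a proper subring of $\CH_T(G/B)$ integrally. The results of \cite{CZZ1,CZZ2} you invoke do \emph{not} prove surjectivity of $\rho$; what they prove is the GKM-type description of the image of $\DFd\hookrightarrow S_W^\star$ by congruence conditions $p_w-p_{s_\alpha w}\in x_\alpha S$ (\cite[Thm.~10.7 and 11.9]{CZZ1}). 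The paper's proof of (i) uses exactly this: by Lemma~\ref{lem:fgr} one has $\mathcal{C}(x_\alpha S_1)\subset x_\alpha S_2$, so $\mathcal{C}_{W/W_\Theta}$ preserves the congruences and hence restricts to $\mathbb{D}_{F_i,\Theta}^\star$. This argument needs no generation statement at all.

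The same issue recurs in your (ii): the claim that $\hh_1(G/P_\Theta)$ is generated as an $R_1$-algebra by first Chern classes is false integrally for the same groups. The paper circumvents this by lifting both $\bar{\mathcal{C}}$ and $\mathcal{C}$ along the canonical map $\Omega(G/B)\to\hh_1(G/B)$ (which is surjective because $\hh_1$ is free and $\Omega(G/B)$ is free over $\LL$ on the Schubert basis) and then using that Chern classes generate $\Omega(G/B)$ \emph{rationally} (\cite[Cor.~13.9]{CPZ}). Your argument up to the descent to $\bar{\mathcal{C}}$ is fine and matches the paper; the identification $\bar{\mathcal{C}}=\mathcal{C}$ needs this extra detour through $\Omega$.
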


\begin{dfn}
We call the map $\mathcal{C}_T$ a localized multiplicative operation on $T$-equivariant AOCTs induced by $(\phi,\gamma)$.
\end{dfn}

\begin{proof} (i) Assume $\Theta=\emptyset$, i.e. $G/P_\Theta=G/B$.
By \cite[Thm.~10.7]{CZZ1} the image of the restriction to fixed locus $\DFd \hookrightarrow S_W^\star$ and, hence, the equivariant cohomology can be described as
\[
\hh_T(G/B)\simeq \{\sum_w p_w f_w\in S_W^\star \mid p_w-p_{s_\alpha w} \in x_{\alpha}S,\; \alpha\in \Sigma^+,\; w\in W \}.
\]
By Lemma~\ref{lem:fgr} we have
\[
\mathcal{C}(p_w)-\mathcal{C}(p_{s_\alpha w})\in \mathcal{C}(x_\alpha S_1)\subset x_\alpha S_2.
\]
So the operation $\mathcal{C}_W$ preserves the above relations and, therefore, it restricts to the equivariant cohomology. We denote the restriction by $\mathcal{C}_T$. By Lemma~\ref{lem:char} it commutes with the characteristic and Borel maps.

The case of a general $\Theta$ follows by the same arguments using the parabolic analogue \cite[Thm.~11.9]{CZZ1} of \cite[Thm.~10.7]{CZZ1}.

(ii) Since $\mathcal{C}$ preserves the augmentation ideal, the operation $\mathcal{C}_T$ induces the map on quotients 
\[
\tilde{\mathcal{C}}\colon \hh_1(G/P_\Theta)\simeq \mathbb{D}_{F_1,\Theta}^\star/\mathcal{I}_1\mathbb{D}_{F_1,\Theta}^\star \longrightarrow \mathbb{D}_{F_2,\Theta}^\star/\mathcal{I}_2\mathbb{D}_{F_2,\Theta}^\star \simeq \hh_2(G/P_\Theta)
\]
which also commutes with the characteristic map. 

Observe that the multiplicative operation $\mathcal{C}$ corresponding to the morphism $(\phi,\gamma)\colon F_1/R_1 \to F_2/R_2$ maps $c_1^{\hh_1}(\Ll)$ to $\gamma(c_1^{\hh_2}(\Ll))$ for any line bundle $\Ll$ over $G/B$ (e.g. see \cite[\S2.5]{Pa04}) and the same holds for the operation $\tilde{\mathcal{C}}$ since it commutes with the characteristic map. 

Consider the canonical map $g\colon \Omega \to \hh_1$. It maps the $\LL$-basis $\{\varepsilon( Y_{I_w}^*)\}_w$ of $\Omega(G/B)$ to the respective $R_1$-basis of $\hh_1(G/B)$ and coefficients in $\LL$ to the coefficients of $F_1$ in $R_1$. It also maps $c_1^\Omega(\Ll) \mapsto c_1^{\hh_1}(\Ll)$. Both operations $\tilde{\mathcal{C}}$ and $\mathcal{C}$ can be uniquely extended to the operations $\Omega(G/B) \to \hh_2(G/B)$ via $g$. Moreover, these extensions  will coincide on characteristic classes of line bundles. Since the characteristic classes $c_1^{\Omega}(\Ll)$ generate $\Omega(G/B)$ rationally by \cite[Corollary~13.9]{CPZ}, the 
operations $\tilde{\mathcal{C}}$ and $\mathcal{C}$ coincide over $G/B$.

Finally, observe that the pull-back $\pi^*\colon \hh(G/P_\Theta) \to \hh(G/B)$ is the inclusion and both operations $\mathcal{C}$ and $\tilde{\mathcal{C}}$ commute with $\pi^*$ by definition.
\end{proof}

\subsection{\it Localized LN operations and their Chow traces}
By Theorem~\ref{thm:main} we can extend the LN operations and their Chow traces as follows:

Recall that both these operations are projections of  the graded multiplicative operations $\mathcal{C}^\Omega\colon \Omega \to \Omega[\mathfrak{t}]$ and $\mathcal{C}^{\mathrm{Ch}}\colon \mathrm{Ch} \to \mathrm{Ch}[\mathfrak{t}']$ induced by the morphisms of FGLs $(\phi,\lambda_{\mathfrak{t}})\colon F_U \to F_U[\mathfrak{t}]$ and $(\mathrm{id},\lambda_{\mathfrak{t}'})\colon F_a\to F_a[\mathfrak{t}']$ respectively (here $F_a$ denotes the additive FGL).
We call the associated localized multiplicative operations
\[
\mathcal{C}_T^\Omega\colon \Omega_T(G/P_\Theta) \to \Omega[\mathfrak{t}]_T(G/P_\Theta)\quad\text{ and }\quad
\mathcal{C}_T^{\mathrm{Ch}}\colon \mathrm{Ch}_{T}(G/P_\Theta) \to \mathrm{Ch}[\mathfrak{t}']_T(G/P_\Theta)
\]
the localized total LN operation and the localized total Chow trace operation, respectively.

Finally, let $\psi_I$ be the $\LL$-linear projection $\LL[\mathfrak{t}] \to \LL$ to the respective monomial $\mathfrak{t}_I$. We call the composite of the total operation $\mathcal{C}_T^\Omega$ with the induced projection 
\[
-\otimes\psi_I\colon \Omega[\mathfrak{t}]_T(G/P_\Theta)=\Omega_T(G/P_\Theta)\otimes_{\LL}{\LL[\mathfrak{t}]} \longrightarrow \Omega_T(G/P_\Theta)
\] 
by the localized LN operation corresponding to the partition $I$. 

Similarly, we call the composite of $\mathcal{C}_T^{\mathrm{Ch}}$ with the induced projection 
\[
-\otimes\psi'_{I}\colon \mathrm{Ch}[\mathfrak{t}']_T(G/P_\Theta)=\mathrm{Ch}_T(G/P_\Theta)\otimes_{\mathbb{F}_p}{\mathbb{F}_p[\mathfrak{t}']} \to \mathrm{Ch}_T(G/P_\Theta)
\] 
by the localized Chow trace  corresponding to the partition $I$ (here $\psi'_I\colon \mathbb{F}_p[\mathfrak{t}'] \to \mathbb{F}_p$ is the projection corresponding to $\mathfrak{t}'_I$).


\section{Computation of localized operations}\label{sec:compop}

In the present section we describe an algorithm that can be used to compute multiplicative operations on AOCTs of projective homogeneous varieties. This extends the algorithm provided in \cite{GPS} for the Steenrod operations on Chow groups.

Our setup is as follows. Suppose we are given a multiplicative operation $\hh_1\to \hh_2$ corresponding to the morphism $(\phi,\gamma)\colon F_1/R_1 \to F_2/R_2$ of FGLs. Consider the respective (localized) multiplicative operations $\mathcal{C}_T\colon (\hh_1)_T(G/B) \to (\hh_2)_T(G/B)$ and  $\mathcal{C}\colon \hh_1(G/B) \to \hh_2(G/B)$. Set $S_1=(\hh_1)_T(pt)$ and $S_2=(\hh_2)_T(pt)$.

We choose an $S_1$-basis $\mathcal{B}_1$ of $(\hh_1)_T(G/B)$ and an $S_2$-basis $\mathcal{B}_2$ of $(\hh_2)_T(G/B)$. We want to compute the action of $\mathcal{C}_T$ on elements of $\mathcal{B}_1$ and express it in terms of $\mathcal{B}_2$, i.e. for each $f\in \mathcal{B}_1$ we want to compute the coefficients $c_{f,g} \in S_2$ such that
\[
\mathcal{C}_T(f)=\sum_{g\in \mathcal{B}_2} c_{f,g} g. 
\]

Applying the forgetful map $\varepsilon$ we then restrict it to the respective $R_1$-basis of $\hh_1(G/B)$ and the $R_2$-basis of $\hh_2(G/B)$ and, hence, compute the usual multiplicative operation
\[
\mathcal{C}(\varepsilon(f))=\sum_{g \in \mathcal{B}_2} \epsilon(c_{f,g}) \varepsilon(g).
\]

Below we focus on the cases when both $\mathcal{B}_1$ and $\mathcal{B}_2$ are either the $Q$-dual bases $\{Y_{I_w}^*\}_w$ or the Schubert bases $\{\zeta_{I_w}\}_w$. Note that our algorithms can be easily extended to a general parabolic case, i.e. operations on $G/P_\Theta$ using the results of \cite{CZZ1}.

\subsection{\it The $Q$-dual basis} 
Consider the basis of $\hh_T(G/B)$ given by the $Q$-dual elements $\{Y_{I_w}^*\}_w$. Dualizing the presentation $\delta_v=\sum_{w\le v} b_{v,I_w}Y_{I_w}$, $b_{v,I_w}\in S=\hh_T(pt)$ we obtain (see \cite[(7.1)]{CZZ1})
\[
Y_{I_w}^*= \sum_v b_{v,I_w} f_v \in \hh_T^{l(w)}(G/B),\;\text{where}
\]
\begin{itemize}
\item[(i)] $b_{v,I_w}=0$ for all $v\not\ge w$, and
\item[(ii)] $b_{w,I_w}=\prod_{\alpha\in \Sigma^+\cap w\Sigma^-} x_\alpha$ (see \cite[Lemma~3.3]{CZZ1}).
\end{itemize}
Hence, the transformation matrix $C=(b_{v,I_w})_{v,w}$ over $S$ whose columns are the basis vectors $Y_{I_w}^*$ is low-triangular with monomials $b_{w,I_w}\in S^{(l(w))}$ on the main diagonal.

Consider the inverse matrix $C^{-1}=(a_{I_v,w})_{v,w}$ over $Q$, where $Y_{I_v}=\sum_{w\le v} a_{I_v,w}\delta_w$ with $a_{I_w,w}=1/b_{w,I_w}$. By definition $C^{-1}\cdot (p_v)_v$ expresses $x=\sum_v p_v f_v$, $p_v\in S$ in terms of the basis $\{Y_{I_w}^*\}_w$. Since $C^{-1}$ is also low-triangular, we obtain
\begin{itemize}
\item[(iii)] If $x=\sum_v p_vf_v$ is such that $p_v=0$ for all $v\not\ge w$, then $a_{I_w,w}\cdot p_w \in S$.
\end{itemize}

\begin{rem} 
The algorithm for the Chow theory in \cite{GPS} uses the notation $Z_w^T$ for $Y_{I_w}^*$, $x=(p_v)_v$, $\alpha=x_\alpha$, $\imath_u(x)=p_v$, $\imath_v(Z_w^T)=b_{v,I_w}$. The statements (1-3) of~\cite[Lemma~5.5]{GPS} are precisely the properties (i-iii) above in the case of the additive FGL.
\end{rem}

\begin{rem}
The coefficients $b_{v,I_w}$ and, hence, the matrix $C$ can be efficiently computed using the root polynomial method of \cite{Bi99}, \cite{Wi04} for the Chow theory and the K-theory, respectively. The generalization of this approach to the case of  a hyperbolic FGL can be found in \cite[Theorem~4.4]{LZ17}. 
\end{rem}

\subsection{\it The elimination procedure}
We now extend the elimination procedure of \cite[5.6]{GPS} to an arbitrary FGL and the associated structure algebra. Roughly speaking, this procedure computes the product $(a_w)_w=C^{-1}\cdot (p_v)_v$ using the divisibility property~(iii). We follow closely \cite{GPS}:

We first extend the Bruhat order to a linear order on the Coxeter group $W$. Given a non-zero $x=\sum_v p_vf_v \in \hh_T^{m}(G/B)$ let $u\in W$ be the minimal element such that  $p_u\neq 0$. Then by~(iii) $p_u$ is divisible by the monomial $b_{u,I_u}$. Since $b_{u,I_u} \in S^{(l(u))}$, we get $l(u)\le m$.

Assume $l(u)<m$. Then we compute the quotient $b_u:=p_u/b_{u,I_u} \in S^{(m-l(u))}$ and set $x'=x-b_u Y_{I_u}^*$. Repeat the procedure to $x'=\sum_v p_v' f_v$ and so on. Observe that by construction $p_u'=0$ and $p_w'=0$ for all $w<u$. Hence, this procedure will stop when either $x=0$ or $l(u)=m$.

Suppose $l(u)=m$. Consider all $v\in W$ such that $l(v)=m$. For all such $v$'s $b_v:=p_v/b_{v,I_v} \in S^{(0)}$. Set $y:=x-\sum_{l(v)=m} b_v Y_{I_v}^*$.

Assume $y=\sum_v y_vf_v \neq 0$. Let $u$ be the minimal element such that $y_u\neq 0$, so $l(u)>m$. Then $y_u/b_{u,I_u} \in S$ so $m\ge l(u)$, a contradiction. Hence, $y=0$.

\subsection{\it Algorithm for the $Q$-dual basis}
The action of $\mathcal{C}_T$ on the basis $\{Y_{I_w}^*\}_w$ of $(\hh_1)_T(G/B)$ can be computed as follows:

\subparagraph{I} 
First, one computes the matrix $C=(b_{v,I_w})_{v,w}$, hence, expressing each $Y_{I_w}^*$ as an element $\sum_v b_{v,I_w}f_v$ of $(\hh_1)_T(G/B)$.

\subparagraph{II} 
Then one finds $p_v=\mathcal{C}(b_{v,I_w})$ in $S_2$ for each of the coordinates and obtains the element $x=\sum_v p_v f_v$ of 
$(\hh_2)_T(G/B)$.

\subparagraph{III} 
Finally, one applies the elimination procedure to $x$ and computes $(a_w)_w=C^{-1}\cdot (p_v)_v$, hence, expressing $x$ in terms of the basis $\{Y_{I_w}^*\}_w$ in $(\hh_2)_T(G/B)$. 

\subsection{\it Algorithm for the Schubert basis} 
Set $x_\Pi=\prod_{\alpha \in \Sigma^-}x_\alpha \in S=\hh_T(pt)$. By the (virtual) class of a point denoted $[pt]$ we call the element $x_\Pi f_1 \in \hh_T(G/B)$. Define
\begin{equation}\label{eq:for2}
\zeta_{I_w}=Y_{I_w^{-1}}([pt])=Y_{\alpha_{i_l}}(Y_{\alpha_{i_{l-1}}}(\ldots Y_{\alpha_1}([pt]))).
\end{equation}
The family $\{\zeta_{I_w}\}_w$ forms an $S$-basis of $\hh_T(G/B)$ called the Schubert basis. It has the following properties:

(i)
By \cite[Theorem~12.4]{CZZ1} it is Poincar\'e dual to the $Q$-dual basis $\{Y_{I_{w}}^*\}_w$, namely
\[
A_\Pi(Y_{I_w}^* \cdot \zeta_{I_v}) =\delta_{w,v}\mathbf{1}, \quad\text{ where } \mathbf{1}=\sum_u f_u,\quad A_\Pi (\sum_v p_vf_v)=(\sum_{v} \tfrac{p_v}{v(x_\Pi)})\cdot\mathbf{1}.
\]

(ii)
If $\zeta_{I_w}=\sum_v p_v^{I_w}f_v$, then $p_v^{I_w}=0$ for all $v\not\le w$. 

(iii)
$\zeta_{I_w}\in \hh_T^{N-l(w)}(G/B)$,  where $N=\dim G/B$.

(iv)
By \cite[Lemma~7.3]{CZZ1} we have $p_v^{I_w}=v(x_\Pi)a_{I_w,v}$. In particular, we have
\[
p_w^w=p_w^{I_w}=w(x_\Pi)/b_{w,I_w}.
\]

All these conditions imply that the transformation matrix $D:=(p_v^{I_w})_{w,v}$ over $S$ (the rows of $D$ are the Schubert classes) is low-triangular with the elements $p_w^w$ on the main diagonal.

In the opposite direction, let $D^{-1}=(\tfrac{1}{v(x_\Pi)}b_{v,I_w})_{v,w}$ be the inverse matrix over $Q$ (the rows of $D^{-1}$ are the Schubert coordinates of $(\delta_{u,v})_u$ in $Q$). Then the Schubert coordinates of an element $\sum_v p_v f_v \in \hh_T(G/B)$ are given by $(p_v)_v  \cdot D^{-1}$.

The action of $\mathcal{C}_T$ on the Schubert basis $\{\zeta_{I_w}\}_w$ of $(\hh_1)_T(G/B)$ can be computed as follows:

\subparagraph{I} 
First one computes the matrix $D$ expressing $\zeta_{I_w}$ as $\sum_v p_v^{I_w} f_v$ of $(\hh_1)_T(G/B)$. For this one uses the inductive formulas \eqref{eq:for2}.

\subparagraph{II} 
Then one computes $p_v=\mathcal{C}(p_v^{I_w})$ in $S_2$ for each of the coordinates and obtains the element $x=\sum_v p_v f_v$ of 
$(\hh_2)_T(G/B)$.

\subparagraph{III} 
Finally, one finds the matrix $D^{-1}$ using the formulas for the coefficients $b_{v,I_w}$ and computes $(p_v)_v \cdot D^{-1}$, hence, expressing $x$  in terms of the Schubert basis $\{\zeta_{I_w}\}_w$ in $(\hh_2)_T(G/B)$.


\section{Cohomological operations and push-pull operators} 

In this section we study behaviour of cohomological operations on oriented cohomology of additive types with respect to the (Hecke) action by push-pull operators.

Following \cite[\S4]{CZZ1} we define the Hecke action of $Q_W$ on $Q_W^*=Hom_Q(Q_W,Q)$ as follows:
\[
(z\bullet f)(z'):=f(z'z), \quad z,z'\in Q_W,\; f\in Q_W^*.
\]
By definition, this action is left $Q$-linear, i.e. $z\bullet (qf)=q(z\bullet f)$ and it induces  a $Q$-linear action of $W$ on $Q_W^*$ via $w(f):=\delta_w\bullet f$. We have $q\bullet f_w=w(q)f_w$ and $w(f_v)=f_{vw^{-1}}$ for any $q\in Q$ and $w,v\in W$. By \cite[Theorem~10.13]{CZZ1} this action restricts to the action of $\DF$ on $\DFd\simeq \hh_T(G/B)$ so that $\hh_T(G/B)$ becomes a free $\DF$-module of rank one.

Consider the push-pull element $Y_\alpha=\tfrac{1}{x_{-\alpha}}+\tfrac{1}{x_\alpha}\delta_{s_\alpha}\in \DF$. It acts on $\DFd$ as follows
\begin{align*}
Y_\alpha \bullet (\sum_v p_v f_v) &=\sum_v \tfrac{p_v}{v(x_{-\alpha})}f_v+\sum_v \tfrac{p_v}{vs_\alpha(x_\alpha)}f_{vs_\alpha}\\
 &=\sum_v (\tfrac{p_v}{x_{-v(\alpha)}}+\tfrac{p_{vs_\alpha}}{x_{v(\alpha)}})f_v\\
 &=\sum_v (\kappa_{v(\alpha)}p_v+\Delta_{v(\alpha)}(p_v))f_v,
\end{align*}
where $p_v\in S$, $\kappa_{v(\alpha)}=\tfrac{1}{x_{-v(\alpha)}}+\tfrac{1}{x_{v(\alpha)}}\in S$ and $\Delta_{v(\alpha)}(p_v)=(p_{s_{v(\alpha)}v}-p_v)/x_{v(\alpha)}\in S$ by \cite[Thm.~10.7]{CZZ1}.

\begin{dfn} 
We say that a FGL $F$ is of additive type if its formal inverse coincides with the usual additive inverse, i.e. $-_F x=-x$.
\end{dfn}
Observe that any $F$ of the form $F(x,y)=(x+y)g(x,y)$ for some $g\in R[[x,y]]$ is of additive type.

We are now ready to prove the following
\begin{prop} \label{thm:Riem}
Suppose $(\phi,\gamma)\colon F_1/R_1 \to F_2/R_2$ is a morphism of FGLs of additive type. Let $\mathcal{C}_T\colon (\hh_1)_T(G/B) \to (\hh_2)_T(G/B)$ be the localized multiplicative operation induced by $(\phi,\gamma)$. Let $Y_\alpha$ be the push-pull element acting on the respective oriented cohomology by means of the Hecke action.
 
Then we have
\[
\mathrm{itd}(x_{\alpha}) \bullet \mathcal{C}_T(Y_\alpha \bullet z) = Y_\alpha \bullet \mathcal{C}_T(z),\quad \text{for all }z\in (\hh_1)_T(G/B).
\]
\end{prop}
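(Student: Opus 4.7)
The plan is to reduce the identity to a coordinate-wise computation in $S_W^\star$, using the additive-type hypothesis to drastically simplify the Hecke formula recalled above.

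\textbf{Step 1 (simplified Hecke formula).} For an FGL of additive type one has $x_{-\beta}=-x_\beta$ in the associated formal group ring, so the constant $\kappa_{v(\alpha)}=\tfrac{1}{x_{-v(\alpha)}}+\tfrac{1}{x_{v(\alpha)}}$ vanishes. The Hecke action therefore simplifies on both $(\hh_1)_T(G/B)$ and $(\hh_2)_T(G/B)$ to
\[
Y_\alpha\bullet\bigl(\textstyle\sum_v p_vf_v\bigr)=\sum_v\frac{p_{vs_\alpha}-p_v}{x_{v(\alpha)}}f_v,
\]
and the fractions genuinely lie in $S_i$ (not merely $Q_i$) by the GKM-type description of $\hh_T(G/B)\subseteq S_W^\star$ recalled in the proof of Theorem~\ref{thm:main}.

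\textbf{Step 2 (extending $\mathcal{C}$ to a localization).} Next I extend $\mathcal{C}\colon S_1\to S_2$ to $Q_1\to Q_2$. Since $\mathcal{C}(x_\alpha)=\gamma(x_\alpha)=x_\alpha\cdot\mathrm{itd}(x_\alpha)$ and the constant coefficient of $\mathrm{itd}$ is invertible in $R_2$ (as per the orientation conventions in Section~\ref{sec:fgl}), $\mathrm{itd}(x_\alpha)$ is a unit in the complete local ring $S_2$, so $\mathcal{C}(x_\alpha)$ is invertible in $Q_2$; the extension then exists and satisfies $\mathcal{C}(1/x_{v(\alpha)})=1/\bigl(x_{v(\alpha)}\,\mathrm{itd}(x_{v(\alpha)})\bigr)$.

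\textbf{Step 3 (main computation and cancellation).} Writing $z=\sum_v p_v f_v$ and applying the coordinate-wise formula for $\mathcal{C}_T$ to the result of Step 1 gives
\[
\mathcal{C}_T(Y_\alpha\bullet z)=\sum_v\frac{\mathcal{C}(p_{vs_\alpha})-\mathcal{C}(p_v)}{x_{v(\alpha)}\,\mathrm{itd}(x_{v(\alpha)})}f_v.
\]
Now invoke the twisted-scalar rule $q\bullet f_v=v(q)f_v$: acting by $\mathrm{itd}(x_\alpha)\in S_2$ multiplies the $v$-th coordinate by $v(\mathrm{itd}(x_\alpha))=\mathrm{itd}(x_{v(\alpha)})$, exactly cancelling the unwanted factor in the denominator. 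The result coincides with $Y_\alpha\bullet\mathcal{C}_T(z)$, which is computed by applying Step 1 once more to $\sum_v \mathcal{C}(p_v)f_v$.

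The main interpretive point — and the reason the statement features the twist by $\mathrm{itd}(x_\alpha)$ — is precisely this cancellation in Step 3: the extra $\mathrm{itd}$-factor produced by pushing $\mathcal{C}$ through division by $x_{v(\alpha)}$ is exactly balanced by the $W$-twist inherent in the Hecke action on $f_v$. I do not anticipate a serious technical obstacle; the only care required is to verify the integrality statement in Step 1 and to justify the extension of $\mathcal{C}$ in Step 2.
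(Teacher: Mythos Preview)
Your approach is essentially identical to the paper's: reduce to coordinates in $S_W^\star$, use the additive-type hypothesis to kill $\kappa_{v(\alpha)}$, and match the two sides term by term via the twisted-scalar rule $q\bullet f_v=v(q)f_v$.

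The only wrinkle is Step~2. The paper's definition of a morphism of FGLs does \emph{not} require the constant term of $\gamma(x)/x$ to be a unit in $R_2$; that invertibility condition pertains to \emph{orientations}, not to arbitrary morphisms $(\phi,\gamma)$. So extending $\mathcal{C}$ to $Q_1\to Q_2$ and writing your Step-3 display with $\mathrm{itd}(x_{v(\alpha)})$ in the denominator is not in general legitimate. The paper sidesteps this entirely: it applies $\mathcal{C}$ directly to the integral element $\Delta_{v(\alpha)}(p_v)\in S_1$ (your Step~1 already ensures this lies in $S_1$), then uses the ring-homomorphism identity
\[
\gamma(x_{v(\alpha)})\cdot\mathcal{C}\bigl(\Delta_{v(\alpha)}(p_v)\bigr)=\mathcal{C}(p_{vs_\alpha})-\mathcal{C}(p_v)
\]
in $S_2$, with no division by $\mathrm{itd}$ required. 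Multiplying the left side of the proposition out coordinate-wise gives $\mathrm{itd}(x_{v(\alpha)})\cdot\mathcal{C}(\Delta_{v(\alpha)}(p_v))$, and the identity above (divided only by the regular element $x_{v(\alpha)}$) shows this equals $\Delta_{v(\alpha)}(\mathcal{C}(p_v))$, which is the right side. Your Step~2 is therefore unnecessary, and dropping it removes the only questionable step.
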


\begin{proof} 
Since $F_1$ and $F_2$ are of additive type, $\kappa_{v(\alpha)}=0$ for all $\alpha$ and $v$. 

Let $z=\sum_v p_v f_v \in (\hh_1)_T(G/B)$. Then we obtain
\begin{align*}
\mathrm{itd}(x_\alpha)\bullet \mathcal{C}_T(Y_\alpha \bullet z) &=\mathrm{itd}(x_\alpha)\bullet \mathcal{C}_T\big( \sum_v \Delta_{v(\alpha)}(p_v) f_v\big) \\
 &=\sum_v \tfrac{\gamma(x_{v(\alpha)})}{x_{v(\alpha)}}\mathcal{C}(\Delta_{v(\alpha)}(p_v))f_v =\sum_v \tfrac{\mathcal{C}(p_{s_{v(\alpha)}v})-\mathcal{C}(p_v)}{x_{v(\alpha)}}f_v.
\end{align*}

On the other side we have
\[
Y_\alpha\bullet \mathcal{C}_T(z) =Y_\alpha\bullet (\sum_v \mathcal{C}(p_v) f_v) =\sum_v \Delta_{v(\alpha)}(\mathcal{C}(p_v))f_v=\sum_v \tfrac{\mathcal{C}(p_{s_{v(\alpha)}v})-\mathcal{C}(p_v)}{x_{v(\alpha)}}f_v. \qedhere
\]
\end{proof}

\begin{rem}
Theorem~\ref{thm:Riem} obviously holds for the Chow traces of LN operations as well as for Steenrod operations.

In these cases it can be viewed as an extension of the classical formulas for the Steenrod operations (see e.g. \cite[Prop.~5.3]{Me03}) with respect to the push-forward $\mathrm{Ch}(G/B) \to \mathrm{Ch}(G/P_\Theta)$ induced by projection onto the flag variety corresponding to the minimal parabolic subgroup $P_\Theta$ with $\Theta=\{\alpha\}$.
\end{rem}

\bibliographystyle{alpha}

\end{document}